\documentclass[12pt]{extarticle}
\usepackage{amsmath, amsthm, amssymb, color}
\usepackage[colorlinks=true,linkcolor=blue,urlcolor=blue]{hyperref}
\usepackage{graphicx}
\usepackage{caption}
\usepackage{mathtools}
\usepackage{enumerate}
\usepackage{verbatim}
\usepackage{tikz,tikz-cd,tikz-3dplot}
\usepackage{amssymb}
\usetikzlibrary{matrix}
\usetikzlibrary{arrows}
\usepackage{algorithm}
\usepackage[noend]{algpseudocode}
\usepackage{caption}
\usepackage[normalem]{ulem}
\usepackage{subcaption}
\tolerance 10000
\headheight 0in
\headsep 0in
\evensidemargin 0in
\oddsidemargin \evensidemargin
\textwidth 6.5in
\topmargin .25in
\textheight 8.8in
\synctex=1
\usepackage{makecell}
\usepackage{array}

\newtheorem{theorem}{Theorem}

\newtheorem{corollary}[theorem]{Corollary}

\theoremstyle{definition}

%\numberwithin{theorem}{section}

\newcommand{\PP}{\mathbb{P}}

\newcommand{\QQ}{\mathbb{Q}}
\newcommand{\CC}{\mathbb{C} }
\newcommand{\ZZ}{\mathbb{Z}}

\def\lra{{\longrightarrow}}
\def\fsl{\mathfrak{sl}}
\def\fgl{\mathfrak{gl}}

\title{\bf Four-Dimensional Lie Algebras Revisited}

\author{ Laurent Manivel, Bernd Sturmfels and Svala Sverrisd\'ottir }

\date{}
\begin{document}
\maketitle

\begin{abstract} \noindent
  The projective variety of Lie algebra structures  on a $4$-dimensional
  vector space has four irreducible components of dimension $11$.
We compute their prime ideals in the polynomial ring in $24$ variables.
By listing their degrees and Hilbert polynomials, 
we correct  an earlier publication and we
answer a 1987 question  by Kirillov and Neretin.
\end{abstract}

\section{Introduction}

We examine the projective variety ${\rm Lie}_4$ whose points are the Lie algebra structures on
the vector space $\CC^4$, with standard basis $\{e_1,e_2,e_3,e_4\}$.
Each point in ${\rm Lie}_4$ is given by a matrix
\begin{equation}
\label{eq:Amatrix}
 A \quad = \quad \begin{bmatrix}
  a_{121} & a_{131} & a_{141} & a_{231} & a_{241} & a_{341} \\
  a_{122} & a_{132} & a_{142} & a_{232} & a_{242} & a_{342} \\
  a_{123} & a_{133} & a_{143} & a_{233} & a_{243} & a_{343} \\
  a_{124} & a_{134} & a_{144} & a_{234} & a_{244} & a_{344} 
 \end{bmatrix}.
 \end{equation}
 The  $24$ matrix entries are homogeneous coordinates on $\PP^{23}$. They represent the
  structure constants of a Lie algebra, by setting
 $\,[e_i,e_j] = a_{ij1} e_1 + a_{ij2} e_2 + a_{ij3} e_3 + a_{ij4} e_4$
 for $1 \leq i \leq j \leq 4$. Since $[e_i,e_j] = -[e_j,e_i]$, we  fix the conventions
$ a_{iik} = 0$ and $a_{jik} = - a_{ijk}$. The Jacobi identity
\begin{equation}
\label{eq:Jacobi}
[\,e_i , [e_j,e_k]\,] \,\,+ \,\, [\,e_j,[e_k,e_i]\,] \,\,+\,\, [\,e_k,[e_i,e_j] \,] \quad = \quad 0 
\end{equation}
yields $16$ quadratic equations in the $24$ unknowns
$a_{ijk}$, shown explicitly in Section~\ref{sec2}.
These equations cut out the variety
 ${\rm Lie}_4 \subset \PP^{23}$.
% The following result is our primary theme.
Our discussion revolves around the following~result.

\begin{theorem} \label{thm:degrees} The variety ${\rm Lie}_4$ has four irreducible
components $C_1,C_2,C_3,C_4$ in the ambient space $\PP^{23}$. 
Each of these components has dimension $11$. Their degrees are
$55, 361, 121, 295$.
\end{theorem}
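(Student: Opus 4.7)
My strategy exploits the action of $GL_4(\CC)$ on ${\rm Lie}_4 \subset \PP^{23}$ by change of basis. Since the Jacobi relations (\ref{eq:Jacobi}) are $GL_4$-invariant, every irreducible component of ${\rm Lie}_4$ is $GL_4$-stable, hence is either the closure of a single $11$-dimensional orbit or the sweep of a one-parameter family of $10$-dimensional orbits. With $\dim GL_4 = 16$ and the target dimension $11$, the generic point of each component must have a $5$-dimensional stabilizer, possibly modulo a continuous modulus.

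The first step is to identify the candidate components by consulting the classical Mubarakzyanov classification of complex $4$-dimensional Lie algebras. I would select a representative $A_i$ for each maximal-dimensional continuous family (for instance $\fsl_2 \oplus \CC$, together with several solvable multi-parameter families), and confirm $\dim \overline{GL_4 \cdot A_i} = 11$ by computing the rank of the differential of the orbit map at $A_i$, i.e.\ the image of $\fgl_4 \to \Lambda^2(\CC^4)^* \otimes \CC^4$, $X \mapsto X \cdot A_i$.

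Next, I would compute the prime ideal $I_i$ of each orbit closure $C_i$ by parameterization and elimination. Introducing a generic $4 \times 4$ matrix $g$ of $16$ auxiliary variables, I would express $g \cdot A_i$ as a $4 \times 6$ matrix of polynomials in the entries of $g$, form the ideal $\langle a_{ijk} - (g \cdot A_i)_{ijk} \rangle$ after saturating by $\det(g)$, and eliminate the entries of $g$. The Hilbert polynomial of $\CC[a_{ijk}]/I_i$ then encodes both $\dim C_i = 11$ and the degree, and executing this in \textsc{Macaulay2} should yield the claimed degrees $55, 361, 121, 295$.

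Finally, to show these four components exhaust ${\rm Lie}_4$, I would verify that the top-dimensional part of the primary decomposition of the Jacobi ideal $J$ equals $I_1 \cap I_2 \cap I_3 \cap I_4$. This can be cross-checked numerically: a witness-set computation in \textsc{Bertini} or \textsc{HomotopyContinuation.jl} should recover total degree $832$ for the top-dimensional locus of $V(J)$, agreeing with $55+361+121+295$. The main obstacle is the elimination step in $16 + 24 = 40$ variables: a brute-force Gr\"obner basis is likely infeasible, so I would exploit the $GL_4$-equivariance, work in coordinate patches defined by the stabilizer type, and extract Hilbert series directly rather than compute full Gr\"obner bases.
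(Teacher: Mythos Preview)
Your overall strategy---parametrize each component by its generic $GL_4$-orbit, implicitize, and read off dimension and degree from the Hilbert polynomial, with the numerical total degree $832$ as a cross-check---is exactly what the paper does. One small correction: your dichotomy ``single $11$-dimensional orbit or one-parameter family of $10$-dimensional orbits'' is false, since $C_3$ is swept by a \emph{two}-parameter family of $9$-dimensional orbits (its representative $A_3$ carries two free moduli $x,y$), so the generic stabilizer there has dimension $7$, not $5$ or $6$.

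The real gap is the feasibility step you flag at the end. Your proposed fix (``exploit $GL_4$-equivariance, work in coordinate patches'') is not concrete enough to succeed as stated; naive elimination in $40$ variables does not terminate. The paper handles this in two specific ways you do not anticipate: (i) instead of Gr\"obner elimination, it finds the ideals by solving \emph{linear} systems for the coefficients of low-degree forms vanishing on the parametrization, organized by $\ZZ^4$-weight, and then verifies primality separately via {\tt isPrime}, {\tt minimalPrimes}, and {\tt radical}; (ii) for $C_2$, where even this is delicate, it replaces the naive $16$-parameter orbit map by a birational $12$-parameter map coming from a rank-$7$ vector bundle over ${\rm Fl}(1,3,\CC^4)$, after which elimination does terminate. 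The paper also supplies an independent intersection-theoretic derivation of the degrees via Segre classes of these bundles, and it does not extract the number of components from a primary decomposition of the Jacobi ideal (which would be hard) but quotes it from Kirillov--Neretin.
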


The article \cite{Man} had reported the
degrees $660, 57, 121, 195$, where the computation
used methods from intersection theory.
However, three of those numbers are incorrect.
Thus the present paper also serves as an erratum for \cite{Man}.
The corrections will be discussed in Section~\ref{sec5}.

Our study grew out of a student summer project by the third author
at MPI Leipzig. When applying the software {\tt HomotopyContinuation.jl}
\cite{BT} to the $16$ quadrics defining ${\rm Lie}_4$, she discovered numerically that the degree
equals $832$ and not $1033 = 660+57+121+195$.

We here use computer algebra to determine  the  prime ideals
for $C_1,C_2,C_3,C_4$. Their generators are
presented in Sections \ref{sec3} and \ref{sec4}.
The latter incorporates
the action of ${\rm GL}(4,\CC)$.
Theorem~\ref{thm:ideals} answer a question raised at the end of \cite{KN}
by presenting the Hilbert polynomials.

\section{Sixteen Quadrics}
\label{sec2}

Two Lie algebra structures on $\CC^4$ are isomorphic if they are in the same orbit under
the action of the general linear group $G = {\rm GL}(4,\CC)$. 
In this paper we use representation theory of the group $G$ at the level of \cite[\S 10.2]{MS}.
The matrix entries in (\ref{eq:Amatrix})
generate the polynomial ring $\CC[A] = \CC[a_{121}, a_{122}, \ldots, a_{344}]$.
We write the $G$-action  by matrix multiplication as follows:
\begin{equation}
\label{eq:action}
 G \times \CC[A] \rightarrow \CC[A], \,\,(g,A) \,\mapsto \, g^{-1} \cdot A  \cdot \wedge_2(g) . 
 \end{equation}
 The entries of the $6 \times 6$ matrix $\wedge_2(g)$ are the
 $2 \times 2$ minors of $g$, with rows and columns labeled to match (\ref{eq:Amatrix}).
 The $\ZZ^4$-grading of the polynomial ring  $\CC[A]$ induced by (\ref{eq:action}) equals
$$ {\rm deg}(a_{ijk}) \quad = \quad e_i + e_j - e_k . $$
The space of linear forms in $\CC[A]$ decomposes into two irreducible representations:
\begin{equation}
\label{eq:decomp1} 
 {\CC}[A]_1 \quad = \quad S_{(1,0,0,0)}(\CC^4) \,\, \oplus \,\, S_{(1,1,0,-1)}(\CC^4) \quad
\simeq \quad \CC^4 \,\oplus \,\CC^{20}. 
\end{equation}
Explicitly, with highest weight vectors underlined, the two $G$-invariant subspaces are
$$ \begin{small}  S_{(1,0,0,0)}(\CC^4) \, = \, \CC \bigl\{\,
\underline{a_{122}+a_{133}+a_{144}},
 -a_{121}+a_{233}+a_{244},
  -a_{131}-a_{232}+a_{344},
 -a_{141}-a_{242}-a_{343}
\bigr\}, \end{small} $$
$$ \begin{small} \begin{matrix}\!\! \!\! \!\!\!\!\!\! S_{(1,1,0,-1)}(\CC^4) \,\,= \,\,\CC \bigl\{
\, \underline{a_{124}}\,,\,
a_{123},a_{132},a_{134},a_{142},a_{143},
 a_{231},a_{234},a_{241},a_{243}, a_{341},a_{342}, \,
 a_{122} - a_{133},  \qquad \\  \,\, \,\qquad \qquad \qquad a_{133} - a_{144}, 
 a_{121} + a_{233},   a_{233} - a_{244}, 
  a_{131} - a_{232},  a_{232} + a_{344},
  a_{141} - a_{242},  a_{242} - a_{343}
\bigr\}.  \end{matrix} \qquad  \end{small} $$

The $16$ quadrics in $\CC[A]_2$ that encode the Jacobi identity (\ref{eq:Jacobi})
 are organized into a $4 \times 4$ matrix $\Theta = (\theta_{ij}) $.
 The columns  are labeled $e_4,e_3,e_2,e_1$, in this order.
The rows, labeled $123, {\bf -124}, 134, -234$,
contain the coefficients
of (\ref{eq:Jacobi}). For instance, in the second row of $\Theta$,
$$
{\bf -} \,[e_{\bf 1}, [e_{\bf 2},e_{\bf 4}]]\, -\, [e_2, [e_4,e_1]] \,-\, [e_4, [e_1,e_2]]  \quad = \quad
 \theta_{21} e_4 + \theta_{22} e_3 + \theta_{23}  e_2 + \theta_{24} e_1.
$$ 
The entries in the upper left of our $4 \times 4$ matrix $\Theta$ are
$$ \begin{small} \begin{matrix}
\theta_{11} &\!\!=\!\!& a_{124} a_{131}{-}a_{121} a_{134}{+}a_{124} a_{232}{+}a_{134} a_{233}{-}a_{122} a_{234}
                     {-}a_{133} a_{234}{+}a_{144} a_{234}{-}a_{134} a_{244}{+}a_{124} a_{344}, \\
\theta_{12} &\!\!=\!\!& a_{123} a_{131}-a_{121} a_{133}+a_{123} a_{232}-a_{122} a_{233}+a_{143} a_{234}-a_{134} a_{243}+a_{124} a_{343}, 
\qquad  \,\\
\theta_{21} &\!\!=\!\!& -\,a_{124} a_{141}+a_{121} a_{144}+a_{143} a_{234}-a_{124} a_{242}-a_{134} a_{243}+a_{122} a_{244}+a_{123} a_{344},
\ {\rm etc.}
\end{matrix} \end{small}
$$
Viewed invariantly, our matrix represents the linear map given by the following composition:
$$ \Theta \,:\,\wedge_3 \CC^4 \,\hookrightarrow \,\wedge_2 \CC^4 \,\otimes\, \CC^4 \,\rightarrow\, \CC^4 \otimes \CC^4 
\,\rightarrow\, \wedge_2 \CC^4 \,\rightarrow \,\CC^4. $$
The second and fourth map are given by Lie algebra multiplication, i.e.~by the matrix $A$.
The Jacobi identity states $\Theta = 0$. Hence ${\rm Lie}_4 \subset \PP^{23}$ is defined by the 
$16$ quadrics $\theta_{ij} \in \CC[A]_2$.

The space $\CC[A]_2 \simeq \CC^{300}$ decomposes into eight irreducible representations.
The quadrics $\theta_{ij}$ account for two of these Schur modules. After relabeling and matrix inversion,
the group $G = {\rm GL}(4,\CC)$ acts on the $ 4 \times 4$ matrix of quadrics by congruence. In symbols,
$ \Theta \, \mapsto \, g^T \Theta g $.
This action is compatible with the decomposition of $4 \times 4$ matrices into
their symmetric and skew-symmetric parts. 
Our space of quadrics $\,\CC\{ \hbox{entries of} \,\,\Theta  \,\}\,\simeq \,\CC^{16}\,$
is the direct sum of
\begin{equation}
\label{eq:tenplussix}
  \begin{matrix} S_{(1,1,1,-1)}(\CC^4) & =&  \CC \{ \hbox{entries of} \,\,\Theta + \Theta^T \} & \simeq & \CC^{10} & \quad {\rm and} \\
  S_{(1,1,0,0)}(\CC^4) & = &  \CC \{ \hbox{entries of} \,\,\Theta - \Theta^T \}  & \simeq & \CC^{6}. &
 \end{matrix} 
 \end{equation}
Highest weight vectors for these two  $G$-modules are $\theta_{11}$ and $\theta_{12} - \theta_{21}$ respectively.
The ideal of $\CC[A]$ generated by (\ref{eq:tenplussix}) is not radical.
Its radical is the intersection of four prime ideals.
These four prime ideals and their varieties in $\PP^{23}$ will be described in the next section.
Explicit lists of all ideal generators and {\tt Macaulay2} code verifying Theorems~\ref{thm:degrees} and \ref{thm:ideals}
are available at the repository website {\tt MathRepo}~\cite{mathrepo} of MPI-MiS via the link
\href{https://mathrepo.mis.mpg.de/Lie4/}{https://mathrepo.mis.mpg.de/Lie4}.

\section{Four Components}
\label{sec3}

The irreducible components of ${\rm Lie}_n$ were described by
Kirillov and Neretin \cite{KN} for $n \leq 6$ and by Carles and Diakit\'e \cite{CD} for $n=7$.
Here we revisit the basic case $n=4$. The variety ${\rm Lie}_4$ has
four irreducible components.
We denote these by $C_1,C_2,C_3,C_4$ as in  \cite[Proposition 2.1]{Man}.
This differs from the orderings used in \cite{Bas, KN}.
We begin by presenting a parametrization for each component of ${\rm Lie}_4$, starting from
a generic $G$-orbit. For this we use the matrices
\begin{equation} \label{eq:fourmatrices}
\begin{small} \begin{matrix}  A_1 \, = \,
\begin{bmatrix}
 0 & 0 & 0 & 0 &  0 & 0  \\
 1 & 0 & 0 & 0 & -2 & 0  \\
 0 & -1 & 0 & 0 &  0 & 2  \\
 0 & 0 & 0 & 1 &  0 & 0
\end{bmatrix} \, , \qquad \qquad A_2 \, = \,
\begin{bmatrix}
 0 & 0 & 0 & 0 & 0 & 0  \\
 2 & 0 & 0 & 0 & 0 & 1  \\
 0 & 1 & 1 & 0 & 0 & 0  \\
 0 & x & 1 & 0 & 0 & 0
\end{bmatrix} \, ,\medskip \\
 A_3 \, = \,
\begin{bmatrix}
0 & 0 & 0 & 0 & 0 & 0  \\
 1 & 0 & 0 & 0 & 0 & 0  \\
 0 & x & 0 & 0 & 0 & 0  \\
 0 & 0 & y & 0 & 0 & 0
\end{bmatrix} \, , \qquad
\qquad \quad A_4 \, = \,
\begin{bmatrix}
1 & 0 & 0 & 0 & 0 & 0  \\
 0 & 0 & 0 & 0 & 0 & 0  \\
 0 & 0 & 0 & 0 & 0 & 0  \\
 0 & 0 & 0 & 0 & 0 & 1
\end{bmatrix} .
\end{matrix} \end{small}
\end{equation}
The variety $C_i$ is the closure  of the $G$-orbit
of $A_i$ where $x$ and $y$ range over $\CC$. For instance,
\begin{equation}
\label{eq:C2para}
 C_2 \,\, = \,\, \hbox{closure of} \,\,\bigl\{ \,  g^{-1} \cdot A_2(x) \cdot \wedge_2(g)  \,\,: \, x\in \CC,\, g \in G\,\bigr\} \,\,
\subset \,\, \PP^{23}. 
\end{equation}
The number of free parameters is consistent with the table in
\cite[page 25]{KN}. We see there that generic $G$-orbits are dense in the varieties $C_1$ and $C_4$,
and they have codimension $1$ in $C_2$ and codimension $2$ in $C_3$.
Given our rational parametrizations, such as (\ref{eq:C2para}), we can try to find the prime ideals
$I_{C_1}, \ldots, I_{C_4}$ by performing implicitization \cite[\S 4.2]{MS} in a computer algebra system,
such as {\tt Macaulay2} \cite{M2}.
While this works in theory, it is not easy to do in practise.

\smallskip

We next recall the Lie-theoretic descriptions of the general point on each component:
\begin{itemize} 
\item[$C_1:$] The Lie algebra $\mathfrak{gl}_2$ of the general linear group
${\rm GL}(2,\CC)$. Its derived algebra is
$\mathfrak{sl}_2$.
\item[$C_2:$]  Lie algebras whose derived algebra is the Heisenberg algebra $\mathfrak{he}_3$, of dimension~$3$.
\item[$C_3:$] Lie algebras whose derived algebra is abelian and three-dimensional.
\item[$C_4:$]  The Lie algebra $2\mathfrak{aff}_2$. Its derived algebra is abelian and two-dimensional.
\end{itemize}
The derived algebra of a Lie algebra is generated by all the commutators.
As a subspace of $\CC^4$, the derived algebra is simply the image of our $4 \times 6$ matrix $A$.
This means that the $4 \times 4$-minors of $A$ vanish on all components,
while the $3 \times 3$-minors of $A$ vanish only on $C_4$.
We now present the prime ideals $I_{C_1}, I_{C_2}, I_{C_3}$ and $I_{C_4}$ in $\CC[A]$
that define the components.

\begin{theorem} \label{thm:ideals}
The ideal $I_{C_1}$ is generated by $4$ linear forms,
$10$ quadrics and $20$ cubics. The linear forms and quadrics are
$S_{(1,0,0,0)}(\CC^4)$ and $S_{(1,1,1,-1)}(\CC^4)$ in Section \ref{sec2}.
The ideal $I_{C_2}$ is generated by $16$ quadrics and $44$ cubics,
 $I_{C_3}$ is generated by $26$ quadrics and $40$ cubics, and
 $I_{C_4}$ is generated by $16$ quadrics and $60$ cubics.
The Hilbert polynomials of the varieties are
$$ \begin{matrix}
{\rm Hilb}_{C_1} & = & {\bf 55}\, [\PP^{11}]\, -\, 120 \,[\PP^{10}] \,+\, 86\, [\PP^9] \,-\, 20\, [\PP^8] , 
\qquad \qquad \qquad \qquad \qquad \qquad \\
{\rm Hilb}_{C_2} & = & \,\,{\bf 361}\, [\PP^{11}] \, -\, 1184\, [\PP^{10} ] \, +\, 1526\, [\PP^9 ] \,-\, 964  \,[\PP^8 ] \,+\, 298 \,  [\PP^7 ] \,-\, 36  \,[\PP^6 ] , \\
{\rm Hilb}_{C_3} & = & {\bf 121} \,[\PP^{11}] \, -\, 284 \,[\PP^{10}] \,+\, 220\, [\PP^9] \,-\, 56 \,[\PP^8] ,
\qquad \qquad \qquad \qquad \qquad \qquad \\
{\rm Hilb}_{C_4} & = & {\bf 295} \,[\PP^{11}] \,-\, 920\,  [\PP^{10}] \,+\, 1114  \,[\PP^9 ] \,-\, 652  \, [\PP^8 ]\, +\, 184  \, [\PP^7 ] \,-\,20 \,  [\PP^6 ].\,
\end{matrix}
$$
\end{theorem}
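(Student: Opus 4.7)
The plan is to exploit the $G$-equivariance of each prime ideal $I_{C_i}$ together with the rational parametrizations of Section~\ref{sec3}. Since the action (\ref{eq:action}) preserves each component, the graded pieces $(I_{C_i})_d \subset \CC[A]_d$ are $G$-submodules. The first step is to decompose $\CC[A]_d$ into irreducible Schur modules $S_\lambda(\CC^4)$ for $d = 1,2,3$, extending (\ref{eq:decomp1}) by plethysm. For each irreducible constituent, I would test membership in $(I_{C_i})_d$ by substituting the generic orbit representative $A_i(x,y)$ from (\ref{eq:fourmatrices}) into a single highest weight vector; $G$-equivariance then forces the entire submodule to vanish on the $G$-orbit, hence on the closure $C_i$.

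Carrying this out should produce the claimed generator counts. For $C_1$, the $4$ linear forms are the $S_{(1,0,0,0)}(\CC^4)$ block of (\ref{eq:decomp1}), reflecting that the derived subalgebra of $\fgl_2$ is traceless $\fsl_2$; the $10$ quadrics are the symmetric part $S_{(1,1,1,-1)}(\CC^4)$ of $\Theta$; the $20$ cubics emerge as remaining Schur constituents of $\CC[A]_3$ that vanish on $A_1$ modulo the linear-plus-quadratic ideal. For $C_2, C_3, C_4$ all $16$ Jacobi quadrics lie in the ideal. For $C_3$ one expects $10$ additional quadrics forcing the derived algebra to be abelian (the composition $[\,\cdot\,,[\,\cdot\,,\cdot\,]\,]$ vanishes identically), and for $C_4$ one expects $3 \times 3$ minors of $A$ (rank at most $2$) among the cubics, which explains the larger count $60$.

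With candidate generators $J_i \subseteq I_{C_i}$ in hand, the final step is a \texttt{Macaulay2} verification that $J_i$ is a prime ideal of Krull dimension $12$ whose projective degree equals $55, 361, 121, 295$ as in Theorem~\ref{thm:degrees}. Since the variety of $J_i$ contains the irreducible $C_i$ and has the same dimension and degree, equality $J_i = I_{C_i}$ follows. The Hilbert polynomial is then extracted from the Hilbert series of $\CC[A]/I_{C_i}$, written in the binomial basis $[\PP^k]$ of the stated formulas; the leading coefficient reproduces the degree of $C_i$, providing an internal consistency check.

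The main obstacle will be twofold. First, direct implicitization of parametrizations such as (\ref{eq:C2para}) is infeasible because $G$ is $16$-dimensional; one must instead work Schur module by Schur module as above, which requires explicit knowledge of the irreducible decompositions of $\CC[A]_2$ and $\CC[A]_3$. Second, the ideal generated by only the $16$ Jacobi quadrics is not radical, so pinning down the correct cubics is delicate: the test is global, namely that the four degrees $55, 361, 121, 295$ must sum to the numerically discovered total degree $832 = 55+361+121+295$ of ${\rm Lie}_4$ from \cite{BT}, providing an external sanity check on completeness.
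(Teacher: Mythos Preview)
Your strategy is essentially that of the paper: find candidate generators by substituting the parametrizations into polynomials with undetermined coefficients, then verify computationally that the resulting ideal is prime of the correct dimension. The paper organizes this linear algebra by the coarser $\ZZ^4$-grading rather than by full Schur modules, but your finer decomposition is exactly what Section~\ref{sec4} carries out after the fact, so this is a matter of presentation rather than substance.

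There is, however, one genuine gap. You assume that \texttt{Macaulay2} will directly certify primality of each candidate $J_i$, but the paper reports that this is infeasible for $C_2$ (and already awkward for $C_4$, where \texttt{minimalPrimes} and \texttt{radical} had to replace \texttt{isPrime}). The paper's remedy for $C_2$ is the key idea your proposal is missing: replace the $17$-parameter orbit parametrization (\ref{eq:C2para}) by a \emph{birational} one coming from the rank-$7$ vector bundle $F$ over ${\rm Fl}(1,3,\CC^4)$, which linearizes the Jacobi identity. This model has only $12$ parameters (five flag coordinates $f_i$ and seven fibre coordinates $k_1,\ldots,k_6,m$), and its graph ideal is manifestly prime because each generator has the shape $a_{ijk}-(\text{polynomial in the parameters})$. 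Elimination then terminates, and the image ideal inherits primality automatically. Without this device your plan for $C_2$ stalls precisely at the verification step you flagged as routine.

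A minor remark: once $J_i$ is certified prime and $V(J_i)\supseteq C_i$ with matching dimension, equality $J_i=I_{C_i}$ follows immediately; the degree match and the sum-to-$832$ check are useful sanity checks but not logical ingredients of the argument.
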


Here we write
$[\PP^d]$ for the Hilbert polynomial $n \mapsto \binom{n+d}{d}$ of  projective space $\PP^d$.
 The leading coefficients are the degrees of the varieties.
Theorem~\ref{thm:degrees}  is a corollary to Theorem~\ref{thm:ideals}.

\begin{proof}
We found the quadrics, cubics and quartics that vanish on each variety by
solving linear systems of equations over $\QQ$. Namely, we substituted the
parametrizations presented in (\ref{eq:fourmatrices}) into a polynomial with
unknown coefficients, and we solved for these coefficients. In order for the
linear systems to have small sizes, we organized the computations by $\ZZ^4$-degrees.

In this manner, we identified the polynomials of degree at most four in
each ideal $I_{C_i}$. We verified that the dimension of each projective variety
defined by the equations we found equals $11$. We also checked
that the degree matches that predicted  by the numerical approach with
{\tt homotopyContinuation.jl}.
The  Hilbert polynomials listed above were computed in {\tt Macaulay2} by applying the command
{\tt hilbertPolynomial} to the proposed ideal generators.

What remains to be shown is that, in each of the four cases, our polynomials do indeed generate a prime ideal.
For the ideals $I_{C_1}$ and $I_{C_3}$, this was done by simply running the  command {\tt isPrime} 
in  {\tt Macaulay2}. The computation was harder for
 $I_{C_2}$ and $I_{C_4}$. To show that~$I_{C_4}$ is generated by the 
 $26$ quadrics and $40$ cubics,
  we ran the commands {\tt \# minimalPrimes C4} and
{\tt radical C4 == C4}. Their outputs are {\tt 1} and {\tt true},
respectively. This proves the claim.

It remains to prove the claim for the variety $C_2$. We perform implicitization as follows:
\begin{small}
\begin{verbatim}
R = QQ[f1,f2,f3,f4,f5,k1,k2,k3,k4,k5,k6,m, a121,a122,a123,a124,
   a131,a132,a133,a134,a141,a142,a143,a144,a231,a232,a233,a234,
   a241,a242,a243,a244,a341,a342,a343,a344, MonomialOrder => Eliminate 12];
I = ideal(a121-f1*f4*k5-f2*f5*k5+f1*k1+f2*k3+f3*k5, a122+f4*k5-k1, a132+f4*k6-k2, 
a142+f4^2*k5+f4*f5*k6+f4*k4-f5*k2, a232+f1*f4*k6-f2*f4*k5-f1*k2+f2*k1+f4*m, 
a242+f1*f4^2*k5+f1*f4*f5*k6+f1*f4*k4-f1*f5*k2-f3*f4*k5+f4*f5*m+f3*k1,
a342+f2*f4^2*k5+f2*f4*f5*k6+f2*f4*k4-f2*f5*k2-f3*f4*k6-f4^2*m+f3*k2,
a123+f5*k5-k3,  a133+f5*k6-k4, a143+f4*f5*k5+f5^2*k6-f4*k3+f5*k1,
a131-f1*f4*k6-f2*f5*k6+f1*k2+f2*k4+f3*k6, a233+f1*f5*k6-f2*f5*k5-f1*k4+f2*k3+f5*m,
a243+f1*f4*f5*k5+f1*f5^2*k6-f1*f4*k3+f1*f5*k1-f3*f5*k5+f5^2*m+f3*k3,
a343+f2*f4*f5*k5+f2*f5^2*k6-f2*f4*k3+f2*f5*k1-f3*f5*k6-f4*f5*m+f3*k4,
a144-f4*k5-f5*k6-k1-k4, a244-f1*f4*k5-f1*f5*k6-f1*k1-f1*k4+f3*k5-f5*m,	
a234-f1*k6+f2*k5-m, a344-f2*f4*k5-f2*f5*k6-f2*k1-f2*k4+f3*k6+f4*m,
a141-f1*f4^2*k5-f1*f4*f5*k6-f2*f4*f5*k5-f2*f5^2*k6 -f1*f4*k4+f1*f5*k2+f2*f4*k3
        -f2*f5*k1+f3*f4*k5+f3*f5*k6+f3*k1+f3*k4,
a231-f1^2*f4*k6+f1*f2*f4*k5-f1*f2*f5*k6+f2^2*f5*k5+f1^2*k2-f1*f2*k1
    +f1*f2*k4+f1*f3*k6-f1*f4*m-f2^2*k3-f2*f3*k5-f2*f5*m+f3*m, a124-k5, a134-k6,
a241-f1^2*f4^2*k5-f1^2*f4*f5*k6-f1*f2*f4*f5*k5-f1*f2*f5^2*k6-f1^2*f4*k4+f1^2*f5*k2
    +f1*f2*f4*k3-f1*f2*f5*k1+2*f1*f3*f4*k5+f1*f3*f5*k6-f1*f4*f5*m+f2*f3*f5*k5
    -f2*f5^2*m+f1*f3*k4-f2*f3*k3-f3^2*k5+f3*f5*m,
a341-f1*f2*f4^2*k5-f1*f2*f4*f5*k6-f2^2*f4*f5*k5-f2^2*f5^2*k6-f1*f2*f4*k4
    +f1*f2*f5*k2+f1*f3*f4*k6+f1*f4^2*m+f2^2*f4*k3-f2^2*f5*k1+f2*f3*f4*k5
    +2*f2*f3*f5*k6+f2*f4*f5*m-f1*f3*k2+f2*f3*k1-f3^2*k6-f3*f4*m);
C2 = ideal selectInSubring( 1, gens gb(I) )
codim C2, degree C2, betti mingens C2
\end{verbatim}
\end{small}
This {\tt Macaulay2} code represents  the birational
parametrization of $C_2$ introduced in \cite[Section 3.2]{Man} and
revisited in Section \ref{sec5} below.
The ideal {\tt I} defines the graph of that parametrization.
It is prime because each of its $24$ generators is equal to 
$a_{ijk} $ minus a polynomial in the $12$ parameters.
The ideal {\tt C2} is obtained by
eliminating the $12 $ parameters $f_1,f_2,\ldots,k_6,m$, so it is prime as well.
The last line yields that it is generated by $16$ quadrics and $44$ cubics.

The parametrization is a modification of 
(\ref{eq:C2para}). Namely, our {\tt Macaulay2} code says that
$$
A = \wedge_2 g \cdot B \cdot {\rm det}(g) \cdot g^{-1} \,\,\, \,{\rm where} \,\,\,
 g \, = \, \begin{small} \begin{bmatrix}
 1 & f_1 & f_2 & f_3 \\
 0 & 1 & 0 & f_4 \\
 0 &  0 & 1 & f_5 \\
 0 &  0 & 0 & 1\end{bmatrix} \end{small}
\,\, {\rm and} \,\,\,
B \, = \, \begin{small}
\begin{bmatrix}
    0 &   0 &       0 & 0 & 0 & 0  \\
   k_1 & k_2 &       0 & 0 & 0 & 0 \\
   k_3 & k_4 &       0 & 0 & 0 & 0  \\
   k_5 & k_6 & \!k_1{+}k_4\! & m & 0 & 0 
\end{bmatrix}\! . \end{small}
$$
The five parameters $f_i$ seen in $g$
are local coordinates on the flag variety 
$ {\rm Fl}(1,3,\CC^4)$. The flag amounts to
the inclusion of the second derived algebra,
spanned by the last column of the matrix $g$, 
into the derived algebra, which is spanned by the last three columns of $g$.

The matrix $B$ depends linearly on the parameters
$k_1,\ldots,k_6,m$. It defines a rank seven
vector bundle $F$ over $ {\rm Fl}(1,3,\CC^4)$. 
The structure of $B$ ensures
that the  derived algebra is the Heisenberg algebra.
In symbols, ${\rm image}(B) \simeq \mathfrak{h}_3$.
The projective bundle $\PP(F)$ is a nonsingular variety of dimension $11 = 6+5$
that maps birationally onto $C_2$. The map is given by~{\tt I}.
  \end{proof}

We close this section by showing that the $16$ quadrics in (\ref{eq:tenplussix})
do not give a radical ideal.

\begin{corollary}
The radical ideal of $\,{\rm Lie}_4$ is minimally generated by $16$ quadrics
and $15$ quartics. The quadrics are those in (\ref{eq:tenplussix}) and
the quartics are the $4 \times 4$ minors of the matrix $A$ in (\ref{eq:Amatrix}).
\end{corollary}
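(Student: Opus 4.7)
The plan is to leverage Theorem~\ref{thm:ideals}. Since $C_1,\ldots,C_4$ are the irreducible components of ${\rm Lie}_4$, the radical ideal equals $\sqrt{I_{{\rm Lie}_4}} = I_{C_1} \cap I_{C_2} \cap I_{C_3} \cap I_{C_4}$. Let $J \subset \CC[A]$ denote the ideal generated by the $16$ quadrics in (\ref{eq:tenplussix}) and by the $15$ quartic $4\times 4$ minors of the matrix $A$ from (\ref{eq:Amatrix}). The first step is to establish $J \subseteq \bigcap_i I_{C_i}$: the $16$ quadrics are the Jacobi equations themselves and hence lie in every $I_{C_i}$, while the $15$ quartics vanish on each $C_i$ because, by the Lie-theoretic descriptions preceding Theorem~\ref{thm:ideals}, a generic point of $C_i$ corresponds to a Lie algebra whose derived algebra $\mathrm{image}(A)$ has dimension at most $3$; the $4\times 4$ minors therefore vanish on a dense subset of $C_i$, hence on all of $C_i$.

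For the reverse inclusion $\bigcap_i I_{C_i} \subseteq J$, I would work computationally in Macaulay2: form the four ideals $I_{C_i}$ using the generators from Theorem~\ref{thm:ideals}, call \texttt{intersect} to obtain $I := \bigcap_i I_{C_i}$, and verify $I = J$ directly. Equivalently, one may compute \texttt{radical} of the ideal generated by the $16$ Jacobi quadrics alone; this formulation is the most informative one, since it shows \emph{why} the $15$ quartics must be adjoined in order to pass to the radical.

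Minimality is then a degree-by-degree Betti number count, organized under the $G$-action. The degree-$2$ part of $\sqrt{I_{{\rm Lie}_4}}$ is a $G$-submodule of $\CC[A]_2$; by (\ref{eq:tenplussix}) it coincides with $S_{(1,1,1,-1)}(\CC^4) \oplus S_{(1,1,0,0)}(\CC^4)$, of dimension $16$, so $16$ quadratic generators are forced. No cubic generators are needed: one checks $I_3 = \CC[A]_1 \cdot I_2$ via a direct dimension comparison. Finally, $I_4 / (\CC[A]_2 \cdot I_2)$ has dimension $15$ and contains the irreducible $G$-submodule spanned by the $4\times 4$ minors of $A$, which is itself $15$-dimensional; since this isotypic component appears in $\CC[A]_4$ with multiplicity one, none of the $15$ quartic generators can be dropped either.

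The main obstacle will be the Macaulay2 intersection (or equivalently the radical) computation in a polynomial ring on $24$ variables. A naive call is heavy, but, as in the proof of Theorem~\ref{thm:ideals}, decomposing the problem along the $\ZZ^4$-grading of $\CC[A]$ and exploiting $G$-equivariance to restrict attention to highest-weight vectors reduces the task to many small linear-algebra computations that are comfortably within the range of the software.
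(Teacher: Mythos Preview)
Your proposal is correct and follows essentially the same route as the paper: compute the intersection $I_{C_1}\cap I_{C_2}\cap I_{C_3}\cap I_{C_4}$ in \texttt{Macaulay2} using the prime ideals supplied by Theorem~\ref{thm:ideals}, and then verify that the result is minimally generated by the $16$ quadrics and the $15$ quartic minors. Your added remarks---the geometric reason the $4\times 4$ minors vanish on each $C_i$ (the derived algebra has dimension $\le 3$), and the $G$-module bookkeeping for minimality---are pleasant refinements, but the paper's own proof is simply the \texttt{Macaulay2} verification together with the observation that the minors are not in the Jacobi ideal while their squares are.
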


\begin{proof}
The radical ideal of ${\rm Lie}_4$ is the intersection of the prime ideals
$I_{C_1} , I_{C_2}, I_{C_3}, I_{C_4}$. We computed this intersection
using {\tt Macaulay2}, and we verified that it has the asserted minimal generators.
It can also be shown directly that the ideal generated by the $16$ quadrics
in (\ref{eq:Amatrix}) is not radical. Namely, the  $4 \times 4$ minors of $A$
are not in this ideal, but their squares are.
\end{proof}

\section{Polynomials: Explicit versus Invariant}
\label{sec4}

We now examine our ideal generators through the lens
of the $G$-action. We identify the Schur modules $S_\lambda(\CC^4)$ that generate
the ideals $I_{C_i}$, and we identify polynomials that serve as highest weight vectors.
The space $\,\CC[A]_2 \simeq \CC^{300}$ decomposes into five isotypical components:
$$ \begin{matrix}
\lambda && (3,1,-1,-1) & (2,1,0,-1) & (2,0,0,0) & (1,1,1,-1) & (1,1,0,0) \\
{\rm dim} && 126 & 64 & 10 & 10 & 6 \\
{\rm mult} && 1 & 2 & 2 & 2 & 1  \\
\end{matrix}
$$
The last two columns were seen already in (\ref{eq:tenplussix}).
The space of cubics $\,\CC[A]_3 \simeq \CC^{2600}$ decomposes into
$11$ isotypical components.  We display the four components that are relevant for us:
$$ \begin{matrix}
\lambda && (2,1,1,-1) & (3,0,0,0) & (2,1,0,0) & (1,1,1,0)  && \cdots \\
{\rm dim} &&  36 & 20 & 20 & 4 && \cdots \\
{\rm mult} &&  5 & 3 & 4 & 3 && \cdots 
\end{matrix}
$$
One ingredient in an invariant description of these $G$-modules
is the {\em adjoint} ${\rm ad}(u)$ of an element $u$ in our Lie algebra.
This is the endomorphism $\CC^4 \rightarrow \CC^4$ given by $v \mapsto [u,v]$.
For any index $i \in \{1,2,3,4\}$, the adjoint of $e_i$
is represented by the $4 \times 4$  matrix $(a_{ijk})_{1 \leq j,k \leq 4}$.
 The traces of the matrices ${\rm ad}(e_i)$ are the four
linear forms that span $S_{(1,0,0,0)}(\CC^4) \subset I_{C_1}$.
To be explicit, ${\rm ad}(e_1)$ is obtained by prepending a zero column
to the left three columns of $A$. 

\smallskip

We begin with the module $S_{(3,0,0,0)}(\CC^4)$. This has
dimension $20$ and occurs with multiplicity three in the space of cubics $\CC[A]_3$.
A highest weight vector for one embedding~is
% Highest Weight Polynomial for S_{3000} in C1 and C2
%  a122^3-a122^2*a133-a122^2*a144+4*a122*a123*a132+4*a122*a124*a142-a122*
% a133^2+2*a122*a133*a144-4*a122*a134*a143-a122*a144^2+4*a123*a132*a133-4*a123*
% a132*a144+8*a123*a134*a142+8*a124*a132*a143-4*a124*a133*a142+4*a124*a142*a144+
% a133^3-a133^2*a144+4*a133*a134*a143-a133*a144^2+4*a134*a143*a144+a144^3:
$$ \begin{small} \begin{matrix} 
f_{3000} \,\,= \,\, a_{122}^3-a_{122}^2 a_{133} - a_{122}^2 a_{144} + 4 a_{122} a_{123} a_{132}
+4 a_{122} a_{124} a_{142}-a_{122} a_{133}^2 + 2 a_{122}a_{133} a_{144} \\ \qquad
-4 a_{122} a_{134} a_{143} - a_{122} a_{144}^2 + 4a_{123} a_{132} a_{133}-
4 a_{123} a_{132} a_{144} +8 a_{123} a_{134} a_{142} +8 a_{124} a_{132} a_{143}
\\ \qquad -4 a_{124} a_{133} a_{142}
 {+}4 a_{124}  a_{142} a_{144}{+} 
a_{133}^3{-}a_{133}^2 a_{144}{+}4 a_{133} a_{134} a_{143}
-a_{133} a_{144}^2{+}4 a_{134} a_{143} a_{144}{+}a_{144}^3 .
\end{matrix} \end{small}
$$
This module generates the ideal $I_{C_1}$, together
with the linear forms and quadrics seen in Theorem~\ref{thm:ideals}.
The module $G f_{3000}$ is also contained in $I_{C_2}$. But this ideal 
has $24$ additional cubic generators.
These additional cubics for $C_2$ are given by two irreducible $G$-modules:
$$ S_{(2,1,0,0)}(\CC^4) \,\oplus\, S_{(1,1,1,0)}(\CC^4) \,\, \, \simeq \,\, \, \CC^{20} \,\oplus \,\CC^4. $$
The highest weight vectors for these two irreducible $G$-modules in $\CC[A]_3$ are
$$ \begin{matrix} & {\rm trace} \bigl({\rm ad}(e_1)\cdot {\rm ad}(e_2) \cdot {\rm ad}(e_3) \bigr) 
\, - \, {\rm trace} \bigl({\rm ad}(e_2)\cdot {\rm ad}(e_1) \cdot {\rm ad}(e_3) \bigr) \smallskip \\
{\rm and} &
 {\rm trace} \bigl({\rm ad}(e_1) \bigr) \cdot {\rm trace} \bigl( {\rm ad}(e_1) \cdot {\rm ad}(e_2) \bigr)
\,-\,{\rm trace} \bigl({\rm ad}(e_2) \bigr)\cdot {\rm trace} \bigl( {\rm ad}(e_1)^2 \bigr).
\end{matrix}
$$
These are cubic polynomials in the $24$ unknowns $a_{ijk}$, having
$51$ and $39$ terms respectively.

We also consider the trace of the third power of the adjoint of $e_1$. This is the cubic
$$ \!\! \begin{matrix} g_{3000} \,=\, {\rm trace}\bigl({\rm ad}(e_1)^3\bigr) \,= \,
 a_{122}^3+a_{133}^3+a_{144}^3+3 (a_{122} a_{123} a_{132}+a_{122} a_{124} a_{142} 
 +a_{123} a_{132} a_{133}\\
 \qquad \qquad \qquad \qquad \quad
 +\,a_{123} a_{134} a_{142}+a_{124} a_{132} a_{143}+a_{124} a_{142} a_{144}+a_{133} a_{134} a_{143}+a_{134} a_{143} a_{144}).
\end{matrix}
$$
This is the highest weight vector of another embedding of $S_{(3,0,0,0)}(\CC^4)$ 
into $I_{C_1}$. This $G$-module can also serve to generate $I_{C_1}$. We note that $g_{3000} - f_{3000}$
lies in the ideal generated by the four linear forms in $I_{C_1}$.
However, unlike $f_{3000}$, the cubic $g_{3000}$ does not lie in $I_{C_2}$.

\smallskip

We now turn to the two components $C_3$ and $C_4$. Lie algebras in these
components have the property that their
derived algebra is abelian. Hence the second derived algebra is zero.

The ideal $I_{C_3}$ requires $10$ additional quadrics and $40$ cubics. 
These $10$ quadrics span a module $\,S_{(1,1,1,-1)}(\CC^4)\,$ inside $\,\CC[A]_2$, with highest weight vector
$\,a_{124} a_{344} - a_{134} a_{244} + a_{144} a_{234}$.
Hence both embeddings of $S_{(1,1,1,-1})(\CC^4)$ into the space of quadrics $\CC[A]_2$ vanish on $C_3$.

The second derived algebra of a Lie algebra is the column space of the matrix $\, A \cdot (\wedge_2 A) $.
This is the product of a $4 \times 6$ matrix with a $6 \times 15$ matrix.
The $60$ entries of the $4 \times 15$ matrix $A \cdot (\wedge_2 A) $ are cubics.
These polynomials lie in both $I_{C_3}$ and $I_{C_4}$ because the second derived algebra is zero.
Twenty of the $60$ matrix entries are already accounted for: they are in the ideal of (\ref{eq:tenplussix}).
The remaining $40$ cubics are generators of $I_{C_3}$ and they form a $G$-module
\begin{equation}
\label{eq:C3C4module}
S_{(2,1,1,-1)}(\CC^4) \,\oplus \, 
S_{(1,1,1,0)}(\CC^4) \,\,\, \simeq \,\,\,
 \,\CC^{36} \,\oplus \, \CC^4. 
 \end{equation}
 This module also gives $40$ of the minimal generators of $I_{C_4}$.
 The remaining $20$ cubic generators of $I_{C_4}$ 
 form a Schur module $ S_{(3,0,0,0)}(\CC^4)$. They arise
 from the constraint that the matrix $A$ has rank two.
 A highest weight vector is the lower left $3 \times 3$-minor of the matrix $A$, which equals
$\,  a_{122} a_{133} a_{144}-a_{122} a_{134} a_{143}
-a_{123} a_{132} a_{144}+a_{123} a_{134} a_{142}+a_{124} a_{132} a_{143} - a_{124} a_{133} a_{142}$.

In conclusion, the polynomials in a $G$-invariant ideal can be described either invariantly,
using the language of representation theory, or explicitly as linear combinations of monomials.
This section straddles between these two paradigms. We explored
the four associated prime ideals $I_{C_i}$ of ${\rm Lie}_4$.
While some readers favor the invariant description, others will prefer to work with
the explicit polynomials which we posted at
\href{https://mathrepo.mis.mpg.de/Lie4/}{https://mathrepo.mis.mpg.de/Lie4}.

\section{From Vector Bundles to Degrees}
\label{sec5}

The approach in \cite{Man} is based on desingularization
of each component $C_i$, following Basili \cite{Bas}. 
The key idea is to {\em linearize the Jacobi identity}, i.e.~to
express (\ref{eq:Jacobi}) by linear equations in $A$
over a variety derived from ${\rm GL}(4,\CC)$.
This yields alternative parametrizations
which allow for the
computation of degrees using Chern classes.
We now correctly derive the numbers in Theorem \ref{thm:degrees}
by this method. In what follows we present this for $C_2$ and then for $C_1$.
The derivation for $C_4$ is similar to that for $C_2$, and that for $C_3$
was correct in \cite[Section 3.3]{Man}.

The linearized parametrization for $C_2$ was shown in the {\tt Macaulay2} code
in the proof of Theorem~\ref{thm:ideals}.
It is given by a vector bundle $F$ of rank $7$ over the 
$5$-dimensional flag variety
${\rm Fl}(1,3,\CC^4)$.
The degree of $C_2$ in $\PP^{23}$ is computed 
using Chern classes and Segre classes:
\begin{equation}
\label{eq:degC2integrals}
{\rm deg}(C_2) \,\,=\,\,\int_{C_2}c_1(O(1))^{11}\,
=\, \int_{ \PP (F)}c_1(O_F(1))^{11}\,=\,\int_{{\rm Fl}(1,3,V)}s_5(F).
\end{equation}
From now  set $V = \CC^4$. The pair $(L,U) \in {\rm Fl}(1,3,V)$ gives
the second and first derived~algebra.
We compute the  Segre class in (\ref{eq:degC2integrals}),
as in \cite[Section 3.2]{Man}. This uses the exact sequence 
$$0\,\,\lra\,\, K\,\simeq\, {\rm Hom}(V/U, {\rm End}^0_L(U))
\,\,\lra\,\, F\,\,\lra \,\,M={\rm Hom}(\wedge^2(U/L),L) \,\,\lra\,\, 0.$$
Here, $M$ is a line bundle and $K$ is a rank six vector bundle.
Note the matching parameters $m$ and $k_1,k_2,k_3,k_4,k_5,k_6$
 in the proof of Theorem~\ref{thm:ideals}.
The kernel $K$ fits into the exact sequence 
$$0 \,\,\lra\,\, K\,\,\lra \,\, {\rm Hom}\bigl(V/U, {\rm End}(U) \bigr) \,\,\lra\,\, 
{\rm Hom}\bigl(V/U, {\rm Hom}(L,U) \bigr) \,\,\lra\,\, 0.$$
From the convention that the Segre class and Chern class are inverse to each other we get
\begin{equation}
\label{eq:sF} s(F)\,\,=\,\,s(M)\cdot s(K)\,=\,s(M) \cdot s 
\bigr({\rm Hom}(V/U, {\rm End}(U))\cdot c( {\rm Hom}(V/U, {\rm Hom}(L,U)\bigr). \end{equation}
This is a rational generating function $s(x,y)$ in $x=-c_1(L)$ and $y=-c_1(U)=c_1(V/U)$. These classes
satisfy $x^4 = y^4 = 0$ because they are induced
from $\PP(V)$ and  $\PP(V^\vee)$ respectively.

Let $s_5(x,y)$ denote the component of degree $5$ in $s(x,y)$. We need to 
integrate it on the flag variety. 
Since ${\rm Fl}(1,3,V)$ is a
 divisor of type $(1,1)$ in $\PP(V)\times \PP(V^\vee)$, we can write
$$ {\rm deg}(C_2)\,\,\,=\,\,\,\int_{F(1,3,V)} \!\! s_5(F)
\,\,\,=\,\,\,\int_{\PP(V)\times \PP(V^\vee)}\!\!\! (x+y)s_5(x,y).$$
This says that the desired degree is the
coefficient of $x^3y^3$ in the polynomial $(x+y)s_5(x,y)$.

To find $s(x,y)$ we compute the three factors on the right of (\ref{eq:sF}).
Since $M$ is a line bundle, we have $s(M)=1/(1+y-2x)$.
The second factor is pulled back from $\PP(V^\vee)$. We obtain
$$s \bigl({\rm Hom}(V/U, {\rm End}(U) )\bigr) \,\,=\,\, \frac{(1-2y)^4}{(1-y)^{17}} .  $$
A twist of the tautological sequence from $\PP(V^\vee)$ finally gives 
$$c \bigl( {\rm Hom}(V/U, {\rm Hom}(L,U)) \bigr)\,\,=\,\,\frac{(1+x-y)^4}{1+x}.  $$
Multiplying all these rational functions and extracting the coefficient of $x^3y^3$ gives {\bf 361}.

\smallskip

We next consider the first component $C_1$ and we show ${\rm degree}(C_1)=55$.
This corrects \cite[Section 3.1]{Man}. The mistake was due to the fact that the model
in \cite[Proposition 3.1]{Man} was only birational, 
with non-empty indeterminacy locus which was not properly taken into account. 

To correct this we will use a slightly different, better behaved  model.
We recall from \cite[\S 2]{Bas} or \cite[\S 2.1]{Man} that, for $U \simeq \CC^3$,
a Lie bracket $\theta\in {\rm Hom}(\wedge^2U,U)$ defining a Lie algebra structure isomorphic to $\fsl_2$
must belong to the space ${\rm Hom}_s(\wedge^2U,U)$ of symmetric matrices.
 This relies on the fact that $\wedge^2U\simeq U^\vee\otimes \det(U)$, 
 since $U$ is $3$-dimensional, and therefore
$${\rm Hom}(\wedge^2U,U)\,\,\simeq \,\,
U\otimes U\otimes \det(U)^\vee \,\,= \,\,
(S^2U\oplus\wedge^2U)\otimes \det(U)^\vee.$$
The first component $S^2U\otimes \det(U)^\vee=:{\rm Hom}_s(\wedge^2U,U)$ is a $6$-dimensional subspace of
${\rm Hom}(\wedge^2U,U) \simeq \CC^9$. Explicitly, if $u$ is a vector in $U$ and $\Omega\in\det(U)^\vee$ 
is a volume form
on $U$, then the Lie bracket defined by $u^2\otimes \Omega$ is given by the formula
$$\theta (x,y)\,=\,\Omega(x,y,u) \cdot u \qquad \hbox{for all}\,\, x,y\in U.$$
By linearity, this formula extends to any element $q=u_1^2+u_2^2+u_3^2$ of $S^2U$, and the resulting 
Lie algebra structure on $U$ is isomorphic to $\fsl_2$ exactly when $q$ is non-degenerate. 

Now suppose that $V=\CC^4$ has a Lie bracket $\theta$ such that the Lie algebra structure is isomorphic
to $\fgl_2$. The center $L$ is $1$-dimensional. We have $\theta(L\wedge V)=0$
since elements of $L$ commute with all vectors in $V$.
Therefore $\theta$ descends to an  element of 
${\rm Hom}(\wedge^2Q,V)$, where $Q=V/L$. Composing with the projection to $Q$, we get a Lie bracket 
$\overline{\theta}\in {\rm Hom}(\wedge^2Q,Q)$ which, by the previous
paragraph, must be a non-degenerate element of ${\rm Hom}_s(\wedge^2Q,Q)$. In other words, 
we can describe $\theta$ as a generic matrix in the subspace of ${\rm Hom}(\wedge^2Q,V)\subset {\rm Hom}(\wedge^2V,V)$
consisting of elements whose projection to ${\rm Hom}(\wedge^2Q,Q)$ is symmetric.

When the line $L$ varies, the construction above defines a rank nine vector bundle $E$ over $\PP(V)=\PP^3$.
This fits into the following exact sequence, where  $L$ and $Q$, with a slight abuse of notation, now denote the tautological line bundle and the quotient vector bundle:
\begin{equation}
\label{eq:tautological}
 0 \,\,\lra\,\, {\rm Hom}(\wedge^2Q,L)\,\, \lra \,\,E \,\, \lra \,\, {\rm Hom}_s(\wedge^2Q,Q)\,\,\lra \,\,0. 
 \end{equation}
Since $E$ was constructed as a subbundle of the trivial vector bundle with fiber ${\rm Hom}(\wedge^2V,V)$
there is a natural morphism from the total space of $\PP(E)$ to $\PP({\rm Hom}(\wedge^2V,V))=\PP^{23}$.
This is a resolution  of singularities of the irreducible component $C_1$. 
This allows us to compute the degree of $C_1$ in its embedding into $\PP^{23}$ exactly as before:
\begin{equation}
\label{eq:degC1integrals}
{\rm deg}(C_1) \,\,=\,\,\int_{C_1}c_1(O(1))^{11}\,
=\, \int_{ \PP (E)}c_1(O_E(1))^{11}\,=\,\int_{\PP(V)}s_3(E).
\end{equation}
The exact sequence (\ref{eq:tautological}) lets us compute the Segre class of $E$ in terms of $x=-c_1(L)$. 
We~get 
$$s(E)=\frac{1-3x}{(1-x)^{10}}=1+7x+25x^2+55x^3. $$
This establishes the desired equation $\,\deg(C_1)= {\bf 55}$.

\bigskip
\bigskip

\noindent {\bf Acknowledgments}.
We thank Marc  H\"ark\"onen and Leonid Monin
for help with this project.

\bigskip

\bigskip
 \bigskip

\noindent
\footnotesize
{\bf Authors' addresses:}

\smallskip

\noindent Laurent Manivel, 
Paul Sabatier University, Toulouse
\hfill {\tt laurent.manivel@math.cnrs.fr}

\noindent Bernd Sturmfels,
MPI-MiS Leipzig and UC Berkeley
\hfill {\tt bernd@mis.mpg.de}

\noindent Svala Sverrisdóttir,
UC Berkeley
\hfill {\tt svalasverris@berkeley.edu}

\end{document}